\def\MC{\mathcal{M}}
\def\XC{\mathcal{X}}
\def\FC{\mathcal{F}}
\def\E{\mathbf{E}}
\def\N{\mathbf{N}}
\def\P{\mathbf{P}}
\def\R{\mathbf{R}}
\def\Z{\mathbf{Z}}
\def\x{\mathbf{x}}
\def\y{\mathbf{y}}
\def\1{\mathbf{1}}
\def\z{\mathbf{z}}
\def\tr{\rm{tr}}
\def\al{\alpha}
\def\pa{\partial}
\def\de{\delta}
\def\ka{\varkappa}
\newcommand{\si}{\sigma}
\newcommand{\Si}{\Sigma}
\newcommand{\Om}{\Omega}
\newcommand{\La}{\Lambda}
\numberwithin{equation}{section}
\newtheorem{prop}{Proposition}
\newtheorem{theorem}{Theorem}
\newtheorem{lemma}{Lemma}
\newtheorem{remark}{Remark}
\begin{document}
\title{Fractional kinetic equations}

\author{
	Vassili N. Kolokoltsov\thanks{Department of Statistics, University of Warwick,
		Coventry CV4 7AL UK, and National Research University Higher School of Economics, Moscow, v.kolokoltsov@warwick.ac.uk}
	and Marianna Troeva\thanks{Research Institute of Mathematics, North-Eastern Federal University,
		Yakutsk, Russia, troeva@mail.ru
	}}	

\maketitle

\begin{abstract}
We develop the idea of non-Markovian CTRW (continuous time random walk) approximation to the
evolution of interacting particle systems, which leads to a general class of fractional kinetic
measure-valued evolutions with variable order.
 We prove the well-posedness of the resulting new equations and present
 a probabilistic formula for their solutions. Though our method are quite general,
 for simplicity we treat in detail only the fractional versions of the
 interacting diffusions. The paper can be considered as a development of the ideas
 from the works of Belavkin and Maslov devoted to Markovian (quantum and classical) systems of interacting particles.
\end{abstract}

{\bf Key words:} fractional kinetic equations, interacting particles,
fractional derivative of variable order,
continuous time random walks (CTRW).

{\bf Mathematics Subject Classification (2010)}: {34A08, 35S15, 45G15}


\section{Introduction}

The derivation of kinetic equations for the interacting particles as the dynamic law of large numbers
for Markovian models of interacting particles was seemingly initiated in \cite{Leon} and further developed
by numerous authors in a variety of forms, see e.g. \cite{BelKol, BelMas,Kac, MasTar}.
In this paper we aim to develop these approach for semi-Markovian systems of interacting particles modeled
as continuous time random walks (CTRW) with non-exponential waiting times. The resulting new kinetic equations
turn out to be fractional in time of variable position dependent fractional order.
All general information on fractional calculus that we are using can be found in the
books \cite{Scalabook,Kir94,Podlub99}.

As was stressed in \cite{Ko22},
the standard diffusion process governed by the equation
\begin{equation}
\label{eqdifeq}
\frac{\pa u}{\pa t}(t,x) =\frac12 a(x) \frac{\pa ^2 u}{\pa x^2}(t,x),
\end{equation}
with a positive continuous function $a(x)$,
can be obtained as the limit of appropriate
random walks. These prelimiting random walks can be quite different. For example,
approximating random walks in continuous time can be chosen as jump type processes with the generators
\[
L_hf(x)=\frac{1}{2h^2}[f(x+ h\sqrt{a(x)})+f(x- h\sqrt{a(x)})-2f(x)],
\]
or as jump type processes with the generators
\[
\tilde L_hf(x)=\frac{a(x)}{2h^2}[f(x+ h)+f(x- h)-2f(x)].
\]
Indeed, the operators $L_h$ and $\tilde L_h$ tend to $(1/2)a(x)(d^2/dx^2)$ as $h\to 0$, as well as the jump-type processes generated by operators $L_h$ and $\tilde L_h$ converge to the diffusion generated by the operator $(1/2)a(x)(d^2/dx^2)$.
In the first approximation, the diffusion coefficient $a(x)$ is responsible for the size of jumps (with constant intensity),
and in the second approximation, it is responsible for the intensity of jumps (with constant sizes).
The "rough" diffusion limit does not feel the difference.
The situation changes, if we model jump-type approximations via CTRW with non-exponential waiting times.
If we make the diffusion coefficient responsible for the size of jumps and take
waiting times from the domain of attraction of an $\alpha$-stable law with a constant intensity $\al$,
then the standard scaling would lead (in the limit
of small jumps and large intensities) to the most standard fractional diffusion equation

\begin{equation}
\label{eqdifeqfr}
D^{\al}_{0+*} u(t,x) =\frac12 a(x) \frac{\pa ^2 u}{\pa x^2}(t,x),
\end{equation}
where $D^{\al}_{0+*}$ is the so-called Caputo-Dzerbashyan fractional derivative.

If we will use the CTRW approximations with fixed jump sizes and use $a(x)$
to distinguish intensities at different points, then (as will be shown) we get in
the limit the equation with a variable position-dependent fractional derivative:
\begin{equation}
\label{eqdifeqfr1}
D^{\al a(x)}_{0+*} u(t,x) =\frac12 \frac{\pa ^2 u}{\pa x^2}(t,x).
\end{equation}

Of course, for any decomposition of the diffusion coefficient in a product $b(x)c(x)$ with positive functions $b(x),c(x)$, one can use $b(x)$ as a function controlling the intensity of jumps and $c(x)$ as a function controlling the spread of the jumps.
In this scenario we get in the limit the equation
\begin{equation}
\label{eqdifeqfr2}
D^{\al b(x)}_{0+*} u(t,x) =\frac12 c(x) \frac{\pa ^2 u}{\pa x^2}(t,x).
\end{equation}

In \cite{Ko22} the rigorous derivation of equations of type \eqref{eqdifeqfr2} was given.
In the present paper, following the same idea of the separation of diffusion coefficients in parts
standing for times and sizes of jumps, we derive the fractional kinetic equations of variable order
describing the scale limits of CTRW approximating systems of interacting diffusions.

Kinetic equations are measure-valued equations expressing the dynamic law of large number (LLN) limit of Markovian
interacting particle systems when the number of particles tends to infinity. The resulting nonlinear measure-valued
evolutions can be interpreted probabilistically as nonlinear Markov processes, see \cite{Ko10}.
In case of a finite state space, the set of probability measures coincides with a simplex $\Si_n$ of sequences
of nonnegative numbers $x=(x_1, \cdots, x_n)$ such that $\sum_j x_j=1$, where $n$ can be any natural number.
In a discrete case, the kinetic equations, describing the dynamic law of large number (LLN) limit of Markovian
interacting particle systems when the number of particles tends to infinity, are ODEs of the form
\begin{equation}
\label{eqkindis}
\dot x= xQ(x) \Longleftrightarrow \dot x_i=\sum_k x_k Q_{ki}(x) \,\, \text{for all} \,\, i,
\end{equation}
where $Q$ is a stochastic or Kolmogorov $Q$-matrix (that is, it has non-negative non-diagonal elements
and the elements on each row sum up to zero) depending Lipschitz continuously on $x$.

The evolution of functions $F(t,x)=F(X_t(x))$, where $X_t (x)$ denotes the solution of the equation (\ref{eqkindis}) with the initial condition $x$, satisfies the equation
\begin{equation}
\label{eqkindisfu}
\frac{\pa}{\pa t} F(t,x)=(x,Q(x) \nabla F(t,x))=\sum_{k,i}x_k Q_{ki}(x)\frac{\pa F(t,x)}{\pa x_i}.
\end{equation}

More generally (see \cite{Ko10}), for a system of mean-field interacting particles given by a family of operators $A_{\mu}$,
which are generators of Markov processes in some Euclidean space $\R^d$ depending on probability measures $\mu$ on $\R^d$
as parameters and having a common core, the natural scaling limit of such system, as the number of particles tends to infinity
(dynamic LLN), is
described by the kinetic equations in the weak form
\begin{equation}
\label{eqkin}
(f, \dot \mu_t) = (A_{\mu_t}f,\mu_t),
\end{equation}
where $f$ is an arbitrary function from the common core of the operators $A_{\mu}$.

The corresponding generalization of equation \eqref{eqkindisfu} is the differential equation
in variational derivatives:
\begin{equation}
\label{eqkinfu}
\frac{\pa F(t,\mu)}{\pa t} =\int_{\R^d} \left(A_{\mu} \frac{\de F(t,\mu)}{\de \mu (.)}\right)(z) \mu(dz).
\end{equation}

One can distinguish the cases of evolutions preserving the number of particles
and changing the number of particles. The second case arises from pure jump processes.
It was treated in \cite{KoTr21}. In this paper we shall deal with the first case.

\begin{remark} The proof of the convergence result in \cite{KoTr21} has a gap. Here we use a slightly
different method (which can be used also to correct the arguments of \cite{KoTr21})
 based on the ideas from \cite{Ko09,Ko22}.
\end{remark}

All these equations are derived as the natural scaling limits of some random walks on the space of the
configurations of many-particle systems.

  When the standard random walks are extended to more general CTRWs
(continuous time random walks), characterised by the property that the random times between
jumps are not exponential, but with tail probabilities decreasing as a power function,
their limits turn to non Markovian processes described by the fractional evolutions.

For instance, one can write the Caputo-Dzherbashyan fractional derivative of some fixed order $\al$ instead
of the usual one in \eqref{eqkin}, as was done e.g. in \cite{KochKondr17}.
However, taking into account the natural possibility of different waiting times for jumps from different states
(as for usual Markovian approximation), one would obtain an equation
of a more complicated type, with the fractional derivatives of position-dependent order.
In \cite{KolMal} this derivation was performed for the case of a discrete state space, that is for a nonlinear Markov chain
described by equation \eqref{eqkindis}, leading to the fractional generalization of equation \eqref{eqkindisfu} of the form
\begin{equation}
\label{frackineqdis}
D_{t-\star}^{(x,\al)} F(s,x)=(x,Q(x) \nabla F(s,x)),
\end{equation}
where $\al=(\al_1, \cdots, \al_n)$ is the vector describing the power laws of the waiting times in
various states $\{1, \cdots, n\}$ and $D_{t-\star}^{(x,\al)}$ is the right Caputo-Dzherbashyan fractional
derivative of order $(x,\al)$ depending on the position $x$ and acting on the time variable $t$.

As we shall show, the corresponding limiting equation for the mean field interacting
particle systems in $\R^d$ preserving the number of particles writes down as the equation
\begin{equation}
\label{eqkinfufrac}
D_{t-\star}^{(\al,\mu)} F(s,\mu) =\int_{\R^d} \left(A_{\mu} \frac{\de F(s,\mu)}{\de \mu (.)}\right)(z) \mu(dz),
\end{equation}
where $A_{\mu}$ is the family of generators of Feller processes in $\R^d$ depending on $\mu\in \MC(\R^d)$.
We also supply the well-posedness of this equation and the probabilistic formula for the solutions.
For transparency we shall deal only with the case of diffusion operators $A_{\mu}$.

The content of the paper is as follows. In the next section we recall preliminary material from the theory of kinetic equations. In Section \ref{CTRWmodeling} we construct the CTRW approximations to the systems of mean-field interacting particles for the case of non-exponential waiting times between jumps. In Section \ref{mainresults} we formulate our main results on the convergens of  CTRW approximations to the new kinetic equations with fractional variable order. We also supply a probabilistic formula for the solutions of these equations. In Sections \ref{prooflemma}-\ref{proofTh2} we give the proves of the main results. In additional Section \ref{append} we recall (in a convenient for us form) the jump-type approximations to stable generators.

The bold letters $\E$ and $\P$ will be used to denote expectation and probability.
If not mentioned otherwise, the norm on functions $\|f\|$ denote the usual sup-norm.

For a locally compact space $X$, we denote by $C(X)$ the Banach space of bounded continuous functions
on $X$, by $C_{\infty}(X)$ its subspace of functions vanishing at infinity,
and by $\MC(X)$ the space of bounded Borel measures on $X$. For $f\in C(X)$ and $\mu\in \MC(X)$,
 the usual pairing is given by the integration:
 \[
 (f,\mu)=\int f(x) \mu (dx).
\]

For a locally compact space $X$ let us denote by $C^k(\MC(X))$  the space of functions
$F$ on $\MC(X)$ such that the variational derivatives
$\de^lF(Y)/ \de Y(x_1)\cdots \de Y(x_l)$ are well defined for all $l\le k$, $Y\in \MC(X)$,
and represent a continuous mapping of $(k+1)$ variables
(measures are equipped with their weak topology) and
$\de^lF(Y)/ \de Y(.)\cdots \de Y(.)$ belong to $C_{\infty}(X^l)$ uniformly for $Y$ from bounded sets.

We will work mostly with the case $X=\R^d$. Then we will denote by  $C^k_{\infty}(X)$ the subspace of $C_{\infty}(X)$ consisting
the functions that have derivatives up to order $k$ that belong to the $C_{\infty}(X)$. We shall denote by $C^{1;k}(\MC(X))$
the subspace of $C^1(\MC(X))$ consisting of the functions  $F$
such that
the variational derivatives $\de F(Y)/ \de Y(.)$ belong to the space $C^k_{\infty}(X)$ uniformly for $Y$ from bounded sets.

By $C^{2;1\times 1}(\MC(X))$ we denote the subspace of $C^2(\MC(X))$ consisting of the functions $F$
such that
the derivatives
\[
\frac{\pa}{\pa x}\frac{\de^2F(Y)}{\de Y(x) \de Y(y)},
\frac{\pa}{\pa y}\frac{\de^2F(Y)}{\de Y(x) \de Y(y)},
\frac{\pa^2}{\pa x \pa y}\frac{\de^2F(Y)}{\de Y(x) \de Y(y)}
\]
are well defined and belong to $C_{\infty}(X^2)$.

\section{Preliminaries: kinetic equations}

Let us recall some basic notations and facts from the standard measure-valued-limits
 of interacting processes.

For simplicity let us take the state space of one particle as the Euclidean space $X=\R^d$.
Denoting by $X^0$ a one-point space and by $X^j$ the powers $X\times
\cdots \times X$ ($j$ times),
we denote by ${\cal X}$ their disjoint union
$\XC=\cup_{j=0}^{\infty} X^j$, which stands for the state
space of a random number of similar particles.
We denote the elements of $\XC$ by bold letters, say $\x$, $\y$.

Let us denote by  $S\XC$ (or $SX^k$ resp.) the quotient
space of $\XC$ (or $X^k$ resp.) obtained by factorization with
respect to all permutations.

A key role in the theory of measure-valued limits of interacting
particle systems is played by the scaled inclusion $S\XC$ to $\MC(X)$ given by
\begin{equation}
\label{eqcorrsetparticlesandmes}
 \x=(x_1,...,x_N) \mapsto
(\delta_{x_1}+\cdots +\delta_{x_N})/N=\delta_{\x}/N,
 \end{equation}
 which defines a bijection between $S\XC$ and the set
 $\MC^+_{\delta}(X)$ of normalized finite sums of Dirac's
 $\de$-measures.

\begin{remark}
Let us stress that we are using here a non-conventional notation:
 $\de_{\x}=\de_{x_1}+\cdots +\de_{x_N}$, which is convenient for our purposes.
\end{remark}

Let $L^{\rm dif}_{\mu}$ be a family of diffusion operators in $C_{\infty}(X)$ ($X=\R^d$), i.e.
\begin{equation}
\label{diffone}
L_{\mu}^{\rm dif}f(x) =\frac12 {\tr} \left(G(x) \frac{\pa^2 f}{\pa x^2}\right)
+\left(b_{\mu}(x), \frac{\pa f}{\pa x}\right),
\end{equation}
with the drift coefficient depending continuously
on $\mu\in \MC(X)$.

 Let us choose some jump-type approximations $L^h_{\mu}$ to these operators, that is,
 \begin{equation}
\label{diffoneapp0}
L_{\mu}^hf(x)=\frac{1}{h^2}\int [f(x+hy)-f(x)] p_{\mu}(x,dy),
\end{equation}
with some probability kernels $p_{\mu}(x,dy)$,
  so that
\begin{equation}
\label{diffoneapp}
\|L_{\mu}^{\rm dif}f-L_{\mu}^{h}f\| \to 0, \quad \text{as} \quad h\to 0,
\end{equation}
for all $f\in D$.

The simplest such approximation is of the form
\[
L_{\mu}^hf(x)=\frac{1}{h^2}\sum_{i=1}^d [f(x+hb_ie_i)+f(x-hb_ie_i)-2f(x)]
\]
\begin{equation}
\label{diffoneapp1}
+\frac{1}{h^2}\sum_{i>j}[f(x+hb_{ij}(e_i+e_j)+f(x-hb_{ij}(e_i+e_j)-2f(x)],
 \end{equation}
with appropriate functions $b_i, b_{ij}$ (depending on $x$ and $\mu$),
where $\{e_i\}$ is the standard basis in $\R^d$.

As our basic one-particle generators we choose operators of the form
\begin{equation}
\label{diffonefi}
A_{\mu}f(x)=a(x) L_{\mu}^{\rm dif}f(x),
\end{equation}
with some  bounded from above and below positive function $a(x)$. Let us stress again that the separation
of the multiplier $a(x)$ is not intrinsic. It just shows our choice of the part of the diffusion generator that
will be responsible for the waiting times of jumps in our future CTRW approximation. The corresponding approximation
will be denoted $A^h_{\mu}=a(x) L^h_{\mu}$.

 We shall work under the following conditions:

{\it Condition A.} $G(.)$ and $b_{\mu}(.)\in C^2(\R^d)$, the latter with the norms bounded for bounded $\mu$,
and $G$ is uniformly elliptic. It implies, in particular that
$D=C^2_{\infty}(\R^d)$ is a common core for all Feller semigroups, generated by $L^{\rm dif}_{\mu}$.

{\it Condition B.} The function $b_{.}(x)$ belongs to $C^{2;1\times 1}(\MC(\R^d))$ uniformly in $x$ and \linebreak
$\de b_{\mu}(.)/\de \mu(y)\in C^1(\R^d)$ uniformly in $y$.

{\it Condition C.} The function $a(x)$ belongs to $C^2(\R^d)$
and is bounded from above and below by positive constants.

\begin{remark} These conditions can be relaxed in many directions.
One can also include the case with $G$ and $a$ depending on $\mu$. Essentially what one needs is the well-posedness
of kinetic equation \eqref{eqkin} with $A=L^{\rm dif}_{\mu}$ and twice continuous differentiability of the solutions
with respect to initial data. Chapter 6 of \cite{Kobook19} contains various criteria for this to hold.
\end{remark}

To any family of one-particle differential operators $B_{\mu}$ on $C_{\infty}(\R^d)$, with coefficients depending on the mean-field $\mu$, there corresponds an operator $\hat B$ acting on the space of functions $C^{sym}(\R^{dN})$ on $N$-particle states (for any $N$) by the formula
\begin{equation}
\label{secquant}
\hat B f(\x) =\sum_{j=1}^N B^j_{\de_{\x}/N}f(\x),
\end{equation}
where $B^j_{\mu}$ denotes the operator $B_{\mu}$ acting on the variable $x_j$.

\begin{remark}
In physics this extension of $B_{\mu}$ from a one-particle states to the operator $\hat B$
on multi-particle states is referred to as the second quantization.
\end{remark}

By the transformation  \eqref{eqcorrsetparticlesandmes}, we can transfer the
operator $\hat B$ on $C(S\XC)$ to the operator $\tilde B$
on $C(\MC^+_{\de}(X))$. If $B=A$ is a diffusion operator above, then, by Proposition 9.2 of \cite{Ko10},
\begin{equation}
\label{mainapp}
\tilde A F(Y) =\int_{\R^d} \left(A_Y \frac{\de F(Y)}{\de Y(.)}\right)(z)Y(dz)+O(1/N),
\end{equation}
for $\x=(x_1, \cdots, x_N)$, $Y=\de_{\x}/N$, and any $F\in C^{2;1\times 1}(\MC(\R^d))$,
with $O(1/N)$ being uniform for $F$ from bounded subsets of $F\in C^{2;1\times 1}(\MC(\R^d))$.

This makes it plausible to conclude (proof can be found in \cite{Ko10}) that, as $N\to \infty$,
the process generated by $\tilde A$ converges weakly to a deterministic process on measures
generated by the operator on the r.h.s. of \eqref{eqkinfu}, so that this process is
given by the solution of a kinetic equation \eqref{eqkin}.
Notice that \eqref{eqkin} is obtained from \eqref{eqkinfu} by choosing
$F$ to be a linear function $F(\mu)=(f,\mu)$. We aim at the extension of this result
for CTRW approximations with non-exponential waiting times.

\begin{remark}
Evolution \eqref{eqkinfu} can be used to study the behavior of various functionals of the limit process. However, for our purpose, equation \eqref{eqkinfu} is used as an auxiliary tool that allows us to exploit the theory of operator semigroups.
\end{remark}

\section{CTRW modeling of interacting particle systems}
\label{CTRWmodeling}

Our objective is to obtain the dynamic LLN
for interacting multi-particle systems for the case of non-exponential
waiting times between jumps having a power tail distributions.

Recall that a positive random variable $\si$ with a probability law $\P$ on $[0,\infty)$
is said to have a {\it power tail of index} $\al$ if
\[
\P(\si > t) \sim  \frac{\ka}{t^{\al}}
\]
for large $t$, that is the ratio of the l.h.s. and the r.h.s tends to $1$, as $t \to \infty$.
Here $\ka>0$ is a positive constant.

The power tails are invariant under taking minima,
in the same way, as the exponential tails are. Namely, if
$\si_j$, $j=1, \cdots ,d$, are independent variables with a power tail of indices $\al_i$
and normalising constants $\ka_i$,
then $\si=\min(\si_1, \cdots, \si_d)$ is clearly a variable with a power tail of index
$\al=\al_1+ \cdots + \al_d$ and normalising constant $\ka_1\cdots \ka_d$.

In full analogy with the case of exponential times,
 let us assume here that the waiting time of the particle at $x_i$
has the power tail with the index $\al(x_i)=\al \tau a(x_i)$ with some fixed $\al \in (0,1)$.
For simplicity assume that the normalising constant $\ka$ equals $1$. Consequently,
the minimal waiting time of all $N$ points in a collection $\x=(x_1, \cdots, x_N)$
will have some probability distribution $Q_{\x}(dr)$ with a tail of the index $\al \tau A(\x)$,
where $A(\x)=\sum_i a(x_i)$.

To simplify presentation we shall make the distribution $Q_{\x}$ more precise. Namely, we assume
that these distributions have continuous densities $Q_{\x}(r)$ such that
\begin{equation}
\label{eqpowertail1}
Q_{\x}(r)=\al \tau A(\x) r^{-1-\al \tau A(\x)} \,\, \text{for}
\,\, r\ge B, \,\, \text{and} \,\, Q_{s,x}(r) \le 1 \,\, \text{for all} \,\, r,
\end{equation}
with some $B>0$ uniformly for all $\x, \tau$.

Our approximating process with power tail waiting times controlled by the function $a(x)$
and the jumps governed by the operator $L_{\mu}^h$ can be described probabilistically
as follows. Starting from any time and current state $\x$, we wait a
random waiting time $\si$, which has the distribution $Q_{\x}(dr)$.
When $\si$ rings, a particle at $x_i$ that makes a transition, is chosen
according to the probability law $a(x_i)/A(\x)$,
and then it makes an instantaneous transition to $y$
according to the distribution $p_{\de_{\x}/N}(x_i,dy)$.
 Then this procedure repeats starting from the new state
$\x\setminus x_i \cup y$.

In order to derive the LLN in this case, the usual trick is to lift
this non-Markovian evolution on the subsets $\x=(x_1, \cdots, x_N)$
or the corresponding measures $\de_{\x}/N$ to the
discrete time Markov chain on $\MC^+_{\delta}(X)\times R$
by considering the total waiting time $s$ as an additional space variable. Additionally
one has to adhere to the usual scaling of the waiting time for
the jumps of CTRW (see e.g. \cite{MS2} or \cite{Ko11}).
Thus we shall consider the Markov chain $(M^{\tau}_{\mu,s},S^{\tau}_{\mu,s})(k\tau)$
on $\MC^+_{\delta}(X)\times R_+$ with the jumps occurring at discrete times $k\tau$, $k\in \N$,
such that the process at a state $(\x,s)$ at time $\tau k$
jumps to $(\x \setminus x_i \cup (x_i+hy), s+\tau^{1/\al \tau A(\x)}r)$,
or equivalently a state $(\de_{\x}/N,s)$ jumps to the state
$(\de_{\x}-\de_{x_i}+\de_{x_i+hy})/N, s+\tau^{1/\al \tau A(\x)}r)$,
where first $x_i$ is chosen according to the law $a(x_i)/A(\x)$
and then $y$ according to the law $p_{\de_{\x}/N}(x_i, dy)$, and
where $r$ is distributed by $Q_{\x}(dr)$.

We have three natural small parameters in the model: $1/N$ related to the number of particles,
$h$ related to the size of jumps, and $\tau$ related to the waiting times. In order to obtain
a reasonable limit, we link them as $\tau=1/N=h^2$. Then
\[
\tau A(\x)=(a,\de_{\x}/N)=\int a(y) (\de_{\x}/N) (dy).
\]
Assuming that $\al \sup_x a(x) <1$, we ensure that $\al \tau A(\x)<1$ for all $\x$.

The  transition operator of the chain $(M^{\tau}_{\mu,s},S^{\tau}_{\mu,s})(k\tau)$ is given by
\begin{equation}
\label{transprobmfMCtot2}
U^{\tau}F(\mu,s)=\int_{\R^+} \int_{\R^d} Q_{\x}(dr)\sum_i p_{\mu}(x_i, dy)\frac{a(x_i)}{A(\x)}
F\left(\mu-\tau\de_{x_i}+\tau\de_{x_i+hy}, s+\tau^{1/\al \tau A(\x)}r\right),
\end{equation}
for $\mu=\tau\de_{\x}=\tau\sum_j \de_{x_j}$.

We can define the {\it scaled CTRW of mean-field interacting particles
with power-tail waiting times controlled by the function $a(x)$
and the jumps governed by the operator} $L_{\mu}^h$ as
the value of the first coordinate $M^{\tau}_{\mu,s}$  of the chain
$(M^{\tau}_{\mu,s},S^{\tau}_{\mu,s})(k\tau)$
evaluated at the random time $k\tau$ when the second coordinate
(expressing the total waiting time) $S^{\tau}_{\mu,s}(k\tau)$
reaches $t$, that is, at the time
\[
k\tau =T_{\mu,s}^{\tau}(t)=\inf \{ m\tau: S^{\tau}_{\mu,s} (m\tau) \ge t\},
\]
so that $T_{\mu,s}^{\tau}$ is the inverse process to $S^{\tau}_{\mu,s}$.

Thus this scaled CTRW is the (non-Markovian) process
 \begin{equation}
\label{scaledmfchainwithtail}
\tilde M^{\tau}_{\mu,s}(t)= M^{\tau}_{\mu,s}(T_{\mu,s}^{\tau}(t)).
\end{equation}

Our main results concern the limit of this process as $\tau\to 0$.

\section{Main results}
\label{mainresults}

Recall that we shall always assume that Conditions A-C hold true.

Let us see first of all what happens with the process $(M^{\tau}_{\mu,s},S^{\tau}_{\mu,s})(k\tau)$
in the limit $\tau =h^2\to 0$.  Namely, we are interested
in the weak limit of the chains with transitions
 $[U^{\tau}]^{[t/\tau]}$, where $[t/\tau]$ denotes the integer
 part of the number $t/\tau$, as $\tau\to 0$.

It is well known \cite{Ko11,Kal}  that if such chain converges
to a Feller process, then the generator of this limiting process
can be obtained as the limit
 \begin{equation}
\label{scalingMchainsrep}
\La F= \lim_{\tau\to 0} \frac{1}{\tau}(U^{\tau}F-F).
\end{equation}

Let us introduce the space $\hat C$ of continuous functions $F(\mu,s)$ on $\MC(\R^d)\times \R$ such that
 $F(\mu,s)$ belongs to $C^1_{\infty}(\R)$ as a function of $s$ and to $C^{2;1\times 1}(\MC(\R^d))\cap C^{1;2}(\MC(\R^d))$
 as a function of $\mu$.

\begin{lemma}
\label{lem1}
Assume that $F\in \hat C$. If $\mu=\tau \de_{\x}$ converges
weakly to some measure, which we shall also denote by $\mu$ with some abuse of notation, as $\tau\to 0$, then
 \[
\La F(\mu,s)= \lim_{\tau\to 0} \frac{1}{\tau}(U^{\tau}F-F)(\mu,s)
\]
exists and
 \begin{equation}
\label{eq1lem1}
\La F(\mu,s)=\al (a,\mu) \int_0^{\infty} \frac {F(\mu, s+r)-F(\mu,s)}{r^{1+\al (a, \mu)}} dr
+\frac{1}{(a,\mu)}\int_{\R^d} a(z)\left(L_{\mu}^{\rm dif} \frac{\de F}{\de \mu(.)}\right)(z)\mu(dz).
\end{equation}
\end{lemma}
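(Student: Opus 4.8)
The plan is to compute the limit in \eqref{scalingMchainsrep} directly from the explicit transition operator \eqref{transprobmfMCtot2} by performing a joint expansion in the two coupled small parameters, using the scaling link $\tau = 1/N = h^2$. The key observation is that the increment $U^{\tau}F - F$ naturally splits into two pieces: one coming from the shift in the time variable $s \mapsto s + \tau^{1/(\al\tau A(\x))} r$, and one coming from the jump of the measure argument $\mu \mapsto \mu - \tau\de_{x_i} + \tau\de_{x_i+hy}$. Since the process is a product-form jump, I would write
\[
U^{\tau}F - F = \int\!\!\int Q_{\x}(dr)\sum_i p_{\mu}(x_i,dy)\frac{a(x_i)}{A(\x)}\,\bigl[F(\mu',s') - F(\mu,s)\bigr]
\]
and insert the intermediate point $F(\mu, s')$ to decompose the bracket as $[F(\mu',s') - F(\mu,s')] + [F(\mu,s') - F(\mu,s)]$, handling the spatial and temporal contributions separately.

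For the temporal term $F(\mu,s') - F(\mu,s)$, the main task is to understand the scaling exponent $\tau^{1/(\al\tau A(\x))}$. Using $\tau A(\x) = (a,\mu)$ in the limit, this exponent behaves like $\tau^{1/(\al(a,\mu))}$, which is precisely the CTRW scaling appropriate to a stable subordinator of index $\al(a,\mu)$. Dividing by $\tau$ and integrating against the power-tail density $Q_{\x}(dr)$ from \eqref{eqpowertail1}, I expect the standard computation (see the referenced CTRW literature \cite{Ko11,MS2}) to produce, in the limit $\tau\to 0$, the fractional-in-time generator
\[
\al(a,\mu)\int_0^{\infty}\frac{F(\mu,s+r)-F(\mu,s)}{r^{1+\al(a,\mu)}}\,dr,
\]
with the prefactor $\al(a,\mu)$ and the variable fractional order $\al(a,\mu)$ emerging from the tail index $\al\tau A(\x)$ combined with the scaling normalization. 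This is the step where I anticipate the main difficulty: one must carefully justify interchanging the limit with the integral over $r$ and control the contribution of small $r$ (where the density is only bounded, not exactly a power) versus large $r$, using the $C^1_\infty(\R)$ regularity of $F$ in $s$ to absorb the near-origin behavior.

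For the spatial term $F(\mu',s') - F(\mu,s')$, since $\mu' - \mu = \tau(\de_{x_i+hy} - \de_{x_i})$ is a perturbation of order $\tau$, I would Taylor-expand in the variational derivative, writing $F(\mu',s') - F(\mu,s') = \tau\bigl(\tfrac{\de F}{\de\mu(x_i+hy)} - \tfrac{\de F}{\de\mu(x_i)}\bigr) + O(\tau^2)$, where the $C^{2;1\times1}\cap C^{1;2}$ regularity guarantees the remainder is uniformly controlled. Expanding further in $h$ via the $C^{1;2}$ smoothness recovers the action of $L^h_\mu$ on $\tfrac{\de F}{\de\mu(\cdot)}$ at the point $x_i$; by the approximation property \eqref{diffoneapp} this converges to $L^{\rm dif}_\mu\tfrac{\de F}{\de\mu(\cdot)}$. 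Dividing by $\tau$ and summing over $i$ against the weights $a(x_i)/A(\x)$ with $\tau A(\x) = (a,\mu)$, the sum $\sum_i \tau\,\tfrac{a(x_i)}{(a,\mu)}(\cdots)$ becomes a measure integral, yielding
\[
\frac{1}{(a,\mu)}\int_{\R^d} a(z)\Bigl(L^{\rm dif}_\mu\frac{\de F}{\de\mu(.)}\Bigr)(z)\,\mu(dz),
\]
matching the second term of \eqref{eq1lem1}. The factor $1/(a,\mu)$ appears because each jump advances discrete time by $\tau$ but the normalization of the selection probability carries $A(\x) = (a,\mu)/\tau$ in the denominator; tracking this bookkeeping of the $\tau$-powers correctly across both terms is the part requiring the most care, and I would verify the weak convergence of $\mu = \tau\de_{\x}$ is used consistently to replace discrete sums by integrals throughout.
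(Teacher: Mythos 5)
Your proposal follows essentially the same route as the paper's proof: split the increment $F(\mu',s')-F(\mu,s)$ into a purely temporal part (handled by the power-tail/CTRW scaling estimate, i.e.\ the appendix Proposition \ref{apprrateCTRW} via \eqref{eq10apprrateCTRW}) and a purely spatial part (handled by the first-order variational expansion, the approximation property \eqref{diffoneapp}, and the identity $\sum_i \frac{a(x_i)}{A(\x)}g(x_i)=\frac{(ag,\mu)}{(a,\mu)}$). The only cosmetic difference is the bookkeeping of the cross term: the paper isolates it as an explicit remainder $R$ built from $g_{i,y}(\mu,\cdot)$ and kills it with the same appendix estimate, whereas you absorb it by evaluating the spatial increment at the shifted time $s'$ --- which requires exactly the same control, so the two arguments coincide in substance.
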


\begin{theorem}
\label{th1}
(i) The operator \eqref{eq1lem1} generates a contraction semigroup in
the space of continuous functions $F(\mu,x)$ such that $F(\mu,.)\in C_{\infty}(\R^d)$ uniformly
on bounded subsets of $\MC(\R^d)$, with an invariant core $\hat C$.
It specifies a Markov process
$(M_{\mu,s}(t),S_{\mu,s}(t))$  in $\MC(X)\times \R$ such that

the first coordinate does not depend on $s$ (and thus can be denoted shortly $M_{\mu}(t)$),
is deterministic, and solves the kinetic equation
\begin{equation}
\label{eqkin1}
\frac{d}{dt}(f, \mu_t) = \frac{a(.)}{(a,\mu)}(L^{\rm dif}_{\mu_t}f,\mu_t);
\end{equation}

and the second coordinate is a time nonhomogeneous stable-like subordinator
generated by the time dependent family of generators
 \begin{equation}
\label{eq1th1}
\La^t_{st} g(s)=\al (a,\mu_s) \int_0^{\infty} \frac {g(s+r)-g(s)}{r^{1+\al (a, \mu_s)}} \, dr.
\end{equation}

This subordinator has a smooth transition probability density  $G(u;\mu,s; S)$
(for the transition from $s$ to $S$ during the time interval $[0,u]$).

(ii) The discrete time Markov chains $(M^{\tau}_{\mu,s},S^{\tau}_{\mu,s})(k\tau)$
given by \eqref{transprobmfMCtot2} converge weakly to the Markov process $(M_{\mu,s}(t),S_{\mu,s}(t))$,
that is the corresponding finite-dimensional distributions converge. Moreover, the convergence is uniform on
compact subsets of time, so that, in particular, for any continuous bounded function $F$,
\begin{equation}
\label{eq2th1}
\lim_{\tau\to 0, \, k\tau\to t}\E F\bigl((M^{\tau}_{\mu,s},S^{\tau}_{\mu,s})(k\tau)\bigr)
=\E F\bigl((M_{\mu,s},S_{\mu,s})(t)\bigr)
\end{equation}
uniformly for $t$ from any compact set.
\end{theorem}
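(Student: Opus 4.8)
The starting point of the plan is to read off the triangular structure of the generator \eqref{eq1lem1}. Writing $\La=\La_1+\La_2$ with
\[
\La_1 F(\mu,s)=\al (a,\mu)\int_0^{\infty}\frac{F(\mu,s+r)-F(\mu,s)}{r^{1+\al(a,\mu)}}\,dr,
\qquad
\La_2 F(\mu,s)=\frac{1}{(a,\mu)}\int_{\R^d} a(z)\Bigl(L^{\rm dif}_{\mu}\frac{\de F}{\de\mu(.)}\Bigr)(z)\,\mu(dz),
\]
one observes that $\La_2$ does not involve $s$, while $\La_1$ moves only $s$ and leaves $\mu$ frozen. Hence the $\mu$-marginal evolves autonomously, and since $\La_2$ is first order in the measure variable, this marginal evolution is deterministic. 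Recognizing $\La_2$ as the kinetic generator \eqref{eqkinfu} with effective one-particle operator $\tilde A_{\mu}=a(\cdot)(a,\mu)^{-1}L^{\rm dif}_{\mu}$, I would identify its trajectories with the solutions $\mu_t=M_{\mu}(t)$ of the kinetic equation \eqref{eqkin1}, thereby producing the first coordinate.

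For part (i) I would build the semigroup in three steps. First, Conditions A--C together with the criteria of Chapter~6 of \cite{Kobook19} give well-posedness of \eqref{eqkin1} and, crucially, twice continuous differentiability of the flow $\mu\mapsto M_{\mu}(t)$ with respect to the initial measure in the sense of the classes $C^{2;1\times1}(\MC(\R^d))\cap C^{1;2}(\MC(\R^d))$. Second, along the now-known deterministic trajectory $\La_1$ becomes exactly the time-nonhomogeneous generator \eqref{eq1th1} of a stable-like subordinator whose index $\beta(t)=\al(a,M_{\mu}(t))$ lies in $(0,1)$ (by the standing assumption $\al\sup_x a(x)<1$) and is a smooth function of $t$ because $a\in C^2$ and the flow is smooth; I would construct this subordinator and its transition density $G(u;\mu,s;S)$ and obtain its smoothness from the theory of stable-like densities with smooth variable index bounded away from $0$ and $1$, using the jump approximations recalled in the appendix. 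Third, I would assemble the two pieces into the explicit formula
\[
T_t F(\mu,s)=\int_{\R} F\bigl(M_{\mu}(t),S\bigr)\,G(t;\mu,s;dS),
\]
and check, using the flow property of $M_{\mu}$ and the Chapman--Kolmogorov relation for $G$, that it is a strongly continuous contraction semigroup (contraction because $G$ is a subprobability kernel and the $\mu$-part acts by composition), verifying by differentiation at $t=0$ that its generator coincides with $\La$ on $\hat C$, with $\hat C$ an invariant core.

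For part (ii), once part (i) provides a Feller semigroup generated by $\La$ with the invariant core $\hat C$, I would combine it with Lemma~\ref{lem1}, which supplies the generator convergence $\tau^{-1}(U^{\tau}F-F)\to\La F$ for every $F\in\hat C$, and invoke the Trotter--Kato/Kurtz approximation theorem for discrete-time chains converging to a Feller process (in the form recorded in \cite{Ko11,Kal} and underlying \eqref{scalingMchainsrep}). This yields $[U^{\tau}]^{[t/\tau]}F\to T_tF$ uniformly on compact time intervals, hence the convergence of finite-dimensional distributions and the uniform limit \eqref{eq2th1}.

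The main obstacle I anticipate is not the abstract approximation step but the regularity bookkeeping needed to make it apply, namely the invariance $T_t\hat C\subseteq\hat C$. This requires on one hand that the kinetic flow $M_{\mu}(t)$ preserve the refined differentiability class $C^{2;1\times1}(\MC(\R^d))\cap C^{1;2}(\MC(\R^d))$ uniformly on bounded sets --- the smooth-dependence-on-initial-data estimates for \eqref{eqkin1} --- and on the other hand that the variable-index subordinator density $G$ be regular enough in $s$ to keep $F(\mu,\cdot)$ in $C^1_{\infty}(\R)$ and to interchange with the variational $\mu$-derivatives. Controlling the smoothness of $G$ uniformly as the index $\beta(t)$ varies, and propagating it through the composition with the measure flow, is the delicate technical core; everything else reduces to the cited well-posedness and the standard chain-approximation machinery.
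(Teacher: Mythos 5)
Your proposal follows essentially the same route as the paper: well-posedness and twice continuous differentiability of the kinetic flow from Chapter 6 of \cite{Kobook19} (the paper invokes Theorem 6.8.4 specifically, noting the harmless modification by the multiplier $1/(a,\mu)$), the identification of the $s$-component as a time-nonhomogeneous stable-like subordinator with smooth density along the now-deterministic trajectory, and for part (ii) the combination of Lemma \ref{lem1} with the standard generator-convergence criterion of \cite{Kal} for discrete chains. Your explicit triangular decomposition and the composed semigroup formula are merely a more detailed spelling-out of the same argument, and you correctly flag the core-invariance bookkeeping that the paper handles by citation.
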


\begin{remark}
An important recent contribution \cite{GoldRock} allows one to identify a proper topology
on the space of probability measures that allows one to identify appropriate spaces of
functions $F(\mu,x)$, where the Markov semigroups of the type given above become strongly continuous.
\end{remark}

Finally we can formulate our main result.

 \begin{theorem}
\label{th2}
(i) The marginal distributions of the process
 \begin{equation}
\label{eq1th2}
\tilde M_{\mu,s}(t)= M_{\mu}(T_{\mu,s}(t)),
\end{equation}
where
\[
T_{\mu,s}(t)=\inf \{ u: S_{\mu,s} (u) \ge t\}
\]
 is the inverse process to $S_{\mu,s}$,
can be expressed explicitly as
\begin{equation}
 \label{eq11th2}
\E [F(\tilde M_{\mu,s}(t))]
 =\int_{0}^{\infty} du \int_0^t dS  (t-S)^{-\al (a, M_{\mu}(u))} G(u;\mu,s; S) F(M_{\mu}(u)).
\end{equation}
Its cut-off version writes down as
\begin{equation}
 \label{eq12th2}
\E [F(\tilde M_{\mu,s}(t))\1(T_t\in [1/K,K])]
 =\int_{1/K}^{K} du \int_0^t dS  (t-S)^{-\al (a, M_{\mu}(u))} G(u;\mu,s; S) F(M_{\mu}(u)).
\end{equation}

(ii) The marginal distributions of the scaled CTRW of mean-field interacting
particles \eqref{scaledmfchainwithtail} converge
 to the marginal distributions of the process \eqref{eq1th2},
that is,
 \begin{equation}
\label{eq3th2}
\lim_{\tau \to 0}  \E F(\tilde M^{\tau}_{\mu,s}(t))= \E F(\tilde M_{\mu,s}(t)).
\end{equation}
for a bounded continuous function $F(\mu)$.

(iii) For any smooth function $F(\mu)$ (with continuous bounded variational derivative), the evolution
of averages $F(\mu,s)=\E F(\tilde M_{\mu,s}(t))$ satisfies the mixed fractional differential equation
 \begin{equation}
\label{eq4th2}
D_{t-*}^{\al (a,\mu)}F(\mu,s)
= \frac{1}{(a,\mu)}\int_{\R^d} a(z)\left(L_{\mu}^{\rm dif} \frac{\de F}{\de \mu(.)}\right)(z)\mu(dz),
\quad s \in [0,t],
\end{equation}
with the terminal condition $F(\mu,t)=F(\mu)$,
where the right fractional derivative acting on the variable $s\le t$ of $F(\mu,s)$ is defined as
 \begin{equation}
\label{eq5th2}
D_{t-*}^{\al(a,\mu)}g(s) = -\al (a,\mu) \int_0^{t-s} \frac {g(s+y)-g(s)}{y^{1+\al (a,\mu)}} dy
-\al (a,\mu) (g(t)-g(s)) \int_{t-s}^{\infty}\frac {dy}{y^{1+\al (a,\mu)}}.
\end{equation}
\end{theorem}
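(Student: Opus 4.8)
The three parts are tied together by a single structural observation: the limiting pair $(M_{\mu,s}(u),S_{\mu,s}(u))$ from Theorem \ref{th1} has a deterministic first coordinate $M_\mu(u)$ and a time–inhomogeneous stable-like subordinator $S_{\mu,s}(u)$ as its second coordinate, so that all the randomness of $\tilde M_{\mu,s}(t)$ sits in the hitting time $T_{\mu,s}(t)=\inf\{u:S_{\mu,s}(u)\ge t\}$ and everything reduces to the law of $T_{\mu,s}(t)$ together with the elementary but decisive algebraic identity
\begin{equation}
\label{eqkeyid}
\La_{st}\,g(s)=\al(a,M_\mu(u))\int_0^\infty\frac{g(s+r)-g(s)}{r^{1+\al(a,M_\mu(u))}}\,dr=-D_{t-*}^{\al(a,M_\mu(u))}g(s),
\end{equation}
valid for any $g$ on $[0,t]$ extended to $[t,\infty)$ by its terminal value $g(t)$. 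This is checked by splitting the integral at $r=t-s$ and using $\int_{t-s}^\infty r^{-1-\al}\,dr=(t-s)^{-\al}/\al$, which reproduces exactly \eqref{eq5th2}. Here $\La_{st}$ denotes the subordinator generator \eqref{eq1th1} at operational time $u$, with index $\al(a,M_\mu(u))$.

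For part (i) the plan is the classical renewal/overshoot computation for the inverse subordinator. Since $S_{\mu,s}$ is non-decreasing and right-continuous, the hitting-time identity $\{T_{\mu,s}(t)\le u\}=\{S_{\mu,s}(u)\ge t\}$ holds (even in the inhomogeneous case), whence $\P(T_{\mu,s}(t)\le u)=\int_t^\infty G(u;\mu,s;S)\,dS$. I would then differentiate in $u$, move $\pa_u$ under the integral (licensed by the smoothness of $G$ asserted in Theorem \ref{th1}(i)), and substitute the forward Kolmogorov equation $\pa_u G=(\La_{st})^{*}G$. A Fubini rearrangement together with the L\'evy tail $\int_\ell^\infty \al(a,M_\mu(u))\,r^{-1-\al(a,M_\mu(u))}\,dr=\ell^{-\al(a,M_\mu(u))}$ collapses $\pa_u\int_t^\infty G\,dS$ into $\int_0^t(t-S)^{-\al(a,M_\mu(u))}G(u;\mu,s;S)\,dS$, which is precisely the density of $T_{\mu,s}(t)$. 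Formula \eqref{eq11th2} then follows from $\E F(\tilde M_{\mu,s}(t))=\int_0^\infty F(M_\mu(u))\,\P(T_{\mu,s}(t)\in du)$, and \eqref{eq12th2} is the same computation with $u$ restricted to $[1/K,K]$.

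For part (ii) I would avoid full functional convergence and instead exploit the same hitting-time identity at the prelimit level, $\{T^\tau_{\mu,s}(t)\le u\}=\{S^\tau_{\mu,s}\ge t\}$, so that the one-dimensional distributions of $T^\tau_{\mu,s}(t)$ are read off from those of $S^\tau_{\mu,s}$. Theorem \ref{th1}(ii) (convergence of finite-dimensional distributions, uniform on compacts) then gives $T^\tau_{\mu,s}(t)\Rightarrow T_{\mu,s}(t)$ at every continuity point, while tightness of $T^\tau_{\mu,s}(t)$ follows from $\P(T^\tau_{\mu,s}(t)>K)=\P(S^\tau_{\mu,s}<t)\to\P(T_{\mu,s}(t)>K)$. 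Since the first coordinate converges to the deterministic flow, $\sup_{u\le K}|M^\tau_{\mu,s}(u)-M_\mu(u)|\to 0$ in probability; writing $\E F(M^\tau(T^\tau(t)))$, replacing $M^\tau$ by $M_\mu$ up to a vanishing error controlled by the modulus of continuity of $F\circ M_\mu$ on $[0,K]$, and using that $F\circ M_\mu$ is bounded and continuous, yields \eqref{eq3th2}. The one point needing care is the composition of the random time change with the converging flow, handled precisely by the tightness of $T^\tau_{\mu,s}(t)$ and the uniform convergence of $M^\tau$.

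For part (iii) I would argue by harmonicity. Set $h(\mu,s)=\E F(\tilde M_{\mu,s}(t))=\E_{(\mu,s)}[F(M(\tau_t))]$ with $\tau_t=T_{\mu,s}(t)$ the first passage of $S$ over level $t$; for $s\ge t$ one has $\tau_t=0$ and hence $h(\mu,s)=F(\mu)$, so $h$ is automatically extended by its terminal value, matching the convention in \eqref{eqkeyid}. By the strong Markov property of $(M_{\mu,s},S_{\mu,s})$ and Dynkin's formula, $h(M(u\wedge\tau_t),S(u\wedge\tau_t))$ is a martingale, so $\La h(\mu,s)=0$ on $\{s<t\}$. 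Decomposing the generator \eqref{eq1lem1} as $\La=\La_{st}+\La_{\rm kin}$, where $\La_{\rm kin}h(\mu,s)=\frac{1}{(a,\mu)}\int_{\R^d}a(z)\bigl(L^{\rm dif}_\mu\frac{\de h}{\de\mu(.)}\bigr)(z)\mu(dz)$ is the derivative of $h$ along the kinetic flow, and inserting \eqref{eqkeyid}, the relation $\La h=0$ becomes $-D^{\al(a,\mu)}_{t-*}h(\mu,s)+\La_{\rm kin}h(\mu,s)=0$, which is exactly \eqref{eq4th2} with terminal condition $h(\mu,t)=F(\mu)$. The main obstacle is precisely the regularity needed to make this rigorous: one must verify that $h$ is smooth enough in $s$ up to the boundary $s=t$ for $D^{\al(a,\mu)}_{t-*}h$ to be well defined, and smooth enough in $\mu$ (via the twice continuous differentiability of the kinetic flow in the initial data under Conditions A--C) for $\La_{\rm kin}h$ to make sense, so that $h$ genuinely lies in the domain where $\La h=0$ holds pointwise. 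This is compounded by the fact that the subordinator overshoots rather than hits the level $t$; the nonlocal form of $\La_{st}$ and the terminal-value extension in \eqref{eqkeyid} are exactly what absorb this overshoot, but controlling the boundary behaviour of $h$ as $s\uparrow t$ is the delicate technical step. An alternative route that sidesteps Dynkin is to apply $D^{\al(a,\mu)}_{t-*}$ directly to the explicit formula \eqref{eq11th2} and use the governing equation of $G$ in $s$; the smoothness of $G$ from Theorem \ref{th1}(i) makes this feasible at the cost of heavier calculation.
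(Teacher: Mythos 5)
Your proposal is correct in outline, and parts (i) and (iii) follow essentially the paper's own route: for (i), the paper likewise starts from the hitting-time identity, differentiates $\int_t^\infty G(u;\mu,s;S)\,dS$ in $u$, passes the adjoint generator onto the indicator $\theta_{\ge t}$ by duality, and evaluates $\La_{st}^u\theta_{\ge t}(S)=(t-S)^{-\al(a,M_\mu(u))}$ to obtain \eqref{eq12th2} and then \eqref{eq11th2}; for (iii), the paper also invokes Dynkin's martingale for the representation and the explicit formula \eqref{eq11th2} for verification (deferring details to the reference on CTRW approximations), and your identity expressing the subordinator generator \eqref{eq1th1} applied to a function extended by its terminal value as $-D^{\al(a,\mu)}_{t-*}$ of \eqref{eq5th2} is exactly the mechanism that makes this work. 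Where you genuinely diverge is part (ii). The paper does \emph{not} argue via weak convergence of the hitting times: it computes $\E[F(M^\tau_\mu(Z^\tau_t))\1(Z^\tau_t\in[1/K,K])]$ explicitly from the transition operator \eqref{transprobmfMCtot2}, replaces $F(R-\tau\de_{z_i}+\tau\de_{z_i+hy})$ by $F(R)$, excises a boundary layer $S\in[t-B\tau^{1/\al\tau A(\z)},t]$ where the exact power-tail form of $Q_{\z}$ is unavailable, evaluates the inner integral via \eqref{eq0apprrateCTRW} to produce the kernel $(t-S)^{-\al\tau A(\z)}$ already at the prelimit level, and then passes to the limit by a Riemann-sum approximation justified by Theorem \ref{th1}. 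Your softer route (convergence of $\P(T^\tau_{\mu,s}(t)\le u)$ from that of $S^\tau$, tightness, plus uniform convergence of the first coordinate to the deterministic flow) is conceptually cleaner and shorter, but it requires one ingredient that Theorem \ref{th1}(ii) does not literally supply: locally uniform (in $u$) convergence in probability of $M^\tau_{\mu,s}$ to $M_\mu$, rather than convergence of finite-dimensional distributions. Since the limit is deterministic and continuous this upgrade is plausible (it amounts to tightness of the first coordinate in the Skorokhod space), but it is precisely the step the paper's direct computation is engineered to avoid, so you would need to prove it; alternatively you would need joint convergence of the pair $(M^\tau_{\mu,s}(\cdot),T^\tau_{\mu,s}(t))$ to handle the composition. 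With that ingredient supplied, your argument for (ii) is a valid and arguably more transparent alternative; the paper's version buys explicit control of the prelimit expressions (including the appearance of the kernel $(t-S)^{-\al(a,\cdot)}$ before the limit) at the cost of a longer calculation.
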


\section{Proof of Lemma \ref{lem1}}
\label{prooflemma}

We have
\[
\frac{1}{\tau}(U^{\tau}F-F)(\mu,s)
\]
\[
=\frac{1}{\tau}\int_{\R_+} \int_{\R^d} Q_{\x}(dr)\sum_i p(x_i, dy)\frac{a(x_i)}{A(\x)}
\left[F\left(\mu-\tau\de_{x_i}+\tau\de_{x_i+\sqrt{\tau}y}, s+\tau^{1/\al (a,\mu)}r\right)-F(\mu,s)\right]
\]
\[
=\frac{1}{\tau}\int_X  Q_{\x}(dr)
\left[F\left(\mu,s+\tau^{1/\al (a,\mu)}r\right)-F(\mu,s)\right]
\]
 \begin{equation}
\label{scalingMarlift}
+\frac{1}{\tau}\int_{\R^d} \sum_i p(x_i, dy)\frac{a(x_i)}{A(\x)}
\left[F\left(\mu-\tau\de_{x_i}+\tau\de_{x_i+\sqrt{\tau}y},s\right)-F(\mu,s)\right]+R,
\end{equation}
where the error term is
\[
R=\frac{1}{\tau}\int_{\R_+} \int_{\R^d} Q_{\x}(dr)\sum_i  p(x_i, dy)\frac{a(x_i)}{A(\x)}
\left[g_{i,y}(\mu,s+\tau^{1/\al \tau A(\x)}r)-g_{i,y}(\mu,s)\right],
\]
with
\[
g_{i,y}(\mu,s)=F\left(\mu-\tau\de_{x_i}+\tau\de_{x_i+\sqrt{\tau}y},s\right)-F(\mu,s).
\]
Assuming that $\tau \de_{\x}$ converges to some measure, which we also denote by $\mu$, as $\tau\to 0$,
we can conclude by \eqref{eq10apprrateCTRW} that the first term in   \eqref{scalingMarlift} converges, as $\tau\to 0$, to
\[
\al (a,\mu) \int_0^{\infty} \frac {F(\mu, s+r)-F(\mu,s)}{r^{1+\al (a, \mu)}} dr,
\]
whenever $F$ is continuously differentiable in $s$.
By \eqref{diffoneapp} and the definition of the variational derivative, the second term in
\eqref{scalingMarlift} converges, as $\tau\to 0$, to
\[
\sum_i \frac{a(x_i)}{A(\x)}\left(L^{\rm dif}_{\mu}\frac{\de F}{\de \mu(x_i)}\right)
=\frac{1}{(a,\mu)}\int_{\R^d}a(z)\left(L^{\rm dif}_{\mu}\frac{\de F}{\de \mu(.)}\right)(z) \mu (dz).
\]

To estimate the term $R$ we note that if $F$ has a bounded derivative in $t$
and a bounded variational derivative in $\mu$, then $g(x)$ is uniformly bounded by $\tau$
and the derivative $|\pa g/\pa s|$ is uniformly bounded. Hence by \eqref{eq10apprrateCTRW} it follows that
$R \to 0$, as $\tau \to 0$ implying \eqref{eq1lem1}.

\section{Proof of Theorem \ref{th1}}

(i) By Theorem 6.8.4 of \cite{Kobook19}, under conditions A-C equation \eqref{eqkin} with $A_{\mu}=a(.)L^{\rm dif}_{\mu}$
is well-posed in the set of probability measures, and moreover, the solutions are twice continuously differentiable
with respect to the initial data. This implies that the resolving operators for the corresponding equation \eqref{eqkinfu}
preserve the space $C^{2;1\times 1}(\MC(\R^d))\cap C^{1;2}(\MC(\R^d))$. As can be see from the proof of this result it still hold true if one
change $L_{\mu}^{\rm dif}$ by a common multiplier $1/(a,\mu)$, and thus the same holds for equation \eqref{eqkin1}. Therefore
the second term of operator \eqref{eq1lem1} generates  a deterministic process $M_{\mu}(t)$ solving \eqref{eqkin1}.
Once $\mu_t$ are well defines, the operator \eqref{eq1th1} has the standard form of a stable-like subordinator
(with a time dependent index of stability), implying the rest of statement (i). The last statement
is a well known result for stable-like processes, see e.g. \cite{Ko11}.

(ii) The convergence follows directly from Lemma \ref{lem1}, statement (i) and the standard result (see e.g. \cite{Kal})
stating that convergence \eqref{scalingMchainsrep} for $F$ from a core of the limiting process implies
the weak convergence of the corresponding discrete Markov chains to this limiting process, which is uniform
on compact subsets of time. More precisely, the result from  \cite{Kal} proves this fact for Markov chains
in locally compact spaces, but the proof is seen to be valid for Markov chains with values in the space
of probability measures on $\R^d$.

 \section{Proof of Theorem \ref{th2}}
\label{proofTh2}

(i) If $M_{\mu}(s)$ is a continuous dynamic system in some metric space
(starting in $\mu$ at time zero) and $\Si$ a positive r.v. with density $g(\si)$,
then for any $K>0$ and a continuous bounded function $F(\mu)$,
\begin{equation}
 \label{eq1lemsubs}
\E [F(M_{\mu}(\Si))\1(\Si\in [1/K,K])]= \int_{1/K}^K  F(M_{\mu}(s)) g(s) \, ds.
\end{equation}

If $V_v(s)$ is an increasing Markov process
(starting in $v$ at time zero) with
time dependent generators $A_{M_{\mu}(s)}$ depending on $M_{\mu}(s)$ and with the
transition density $G(s;\mu,v;V)$ (from $v$ at time zero to $V$ at time $s$),
then the random vector $(M_{\mu}(s),V_v(u))$ has the distribution
\begin{equation}
 \label{eq1lemsubs1}
\phi_{\mu,v}(s,u;M,V) \, dM dV=\de (M_{\mu}(s)-M) G(u;\mu,v; V) \, dV.
\end{equation}
Consequently,
\begin{equation}
 \label{eq2lemsubs1}
\frac{\pa}{\pa u} \phi_{\mu,v}(s,u;M,V)=\de (M_{\mu}(s)-M) A^*_{M_{\mu}(u)}G(u;\mu,v; V) \, dV,
\end{equation}
where $A^*$ acts on the variable $V$.

Let the process $V_v(t)$ be strictly increasing a.s., so that its generalized inverse
(or hitting times) process
\[
Z_t=\sup\{ u\ge 0: V_v(u)\le t\}=\inf\{ u\ge 0: V_v(u)> t\}, \quad t>v,
\]
is continuous. Since $(Z_t\le z)=(V_v(z)\ge t)=(V_v(z)>t)$ a.s.,
for the density $g_{\mu,v}(s,t;M,z)$ of the pair $(M_{\mu}(s),Z_t)$, $s> 0$, $t> v$,
the following formula holds:
\[
g_{\mu,v}(s,t;M,z)=  \frac{\pa}{\pa z} \int_t^{\infty} \phi_{\mu,v}(s,z;M,V) \, dV
\]
\begin{equation}
 \label{eq2lemsubs2}
=\de (M_{\mu}(s)-M) \int_t^{\infty} A^*_{M}G(z;\mu,v; V) \, dV.
\end{equation}

From \eqref{eq2lemsubs2} and  \eqref{eq1lemsubs} it follows that
\begin{equation}
 \label{eq1thsubs}
\E [F(M_{\mu}(Z_t)\1(\Z_t\in [1/K,K])]
=  \int_{1/K}^K du \int_t^{\infty} dV \, A^*_{M_{\mu}(u)}G(u;\mu,v;V) F(M_{\mu}(u)),
\end{equation}
and therefore
\[
\E [F(M_{\mu}(Z_t))\1(\Z_t\in [1/K,K])]
= \int_{1/K}^K ds \int_0^{\infty} dV \, (A_{M_{\mu}(s)}\theta_{\ge t})(V)G(s;\mu,v; V) F(M_{\mu}(s)).
\]
where $\theta_{\ge t}$ is the indicator function of the interval $[t,\infty)$.

If
\begin{equation}
 \label{eq1lemsubs2}
 A_{\mu} \phi(V) =\al (a,\mu) \int_0^{\infty} \frac{\phi (V+w)-\phi(V)}{w^{1+\al (a,\mu)}} \, dw,
\end{equation}
then
\[
A_{\mu} \theta_{\ge t}(V)=\al (a,\mu) \int_{t-V}^{\infty} \frac{dw}{w^{1+\al (a,\mu)}}
= (t-V)^{-\al (a, \mu)},
\]
for $t>V$, and $A_{\mu}\theta_{\ge t}(V)=0$ for $t\le V$.
Therefore

\[
\E [F(M_{\mu}(Z_t))\1(\Z_t\in [1/K,K])]
\]
\begin{equation}
	\label{eq2thsubs}
 =\int_{1/K}^K du \int_0^t dV (t-V)^{-\al (a, M_{\mu}(u))} G(u;\mu,v; V) F(M_{\mu}(u)),
\end{equation}
which is \eqref{eq12th2} in slightly different notations.
Passing to the limit as $K\to \infty$ in \eqref{eq12th2} yields \eqref{eq11th2}.

(ii) {\it  Step 1}. As a starting point,
we make some preliminary calculations for the subordinated Markov chains in discrete times.

Let $M(k\tau)$ be an adapted process (with values in some metric space)
on a stochastic basis $(\Om, \FC, \FC_t, P)$ and $\Si$
a stopping time with values in $\{k\tau\}$, $k\in \N$.
Let the random variables  $M(m\tau) \1(\Si=k\tau)$ have distributions $g_{m\tau}(dM, k\tau)$.
Then, for any $K>0$ and a continuous bounded function $F(M)$,
\begin{equation}
 \label{eq1lemsubsdis}
\E [F(M(\Si))\1(\Si\in [1/K,K])]= \sum_{k \tau \in [1/K,K]} \int \, g_{k\tau}(dM, k\tau) F(M).
\end{equation}

Let $M_{\mu}(m\tau)$ be a Markov chain (with values in some metric space)
with a transition operator
\[
U^{\tau}_M F(\mu)=\int F(M) Q_{\tau} (\mu,dM).
\]
Moreover, let the pair $(M_{\mu}(m\tau), S_{\mu,s}(m\tau))$ (with $S \in \R$) also form a Markov chain with the
transition operator $U^{\tau}$ of the form
\[
U^{\tau}F(\mu,v)=\int F(M,S) P_{\tau} (\mu,s; S, dM) dS
=\int F(M,S) Q_{\tau} (\mu,dM) G_{\tau} (\mu,s; S) dS,
\]
with some density  $G_{\tau} (\mu,s; S)$.

Then the disttribution of the random vector $(M_{\mu}(k\tau), S_{\mu,s}(k\tau), S_{\mu,s}((k-1)\tau))$ is
\begin{equation}
 \label{eqjointdis}
\int Q_{\tau}(R,dM) G_{\tau}(R,S;W) P_{(k-1)\tau}(\mu,s; S, dR)\, dS
\end{equation}
(integration carried out over the variable $R$),  because
\[
\E F (M_{\mu}(k\tau), S_{\mu,s}(k\tau), S_{\mu,s}((k-1)\tau))
\]
\[
=\int \E (F(M_{\mu}(k\tau), S_{\mu,s}(k\tau), S)|M_{\mu}((k-1)\tau)=R, S_{\mu,s}((k-1)\tau)=S)P_{(k-1)\tau}(\mu,s; S, dR) dS
\]
\[
=\int F(M, W, S)Q_{\tau}(R,dM) G_{\tau}(R,S;W) \, dW P_{(k-1)\tau}(\mu,s; S, dR) \, dS.
\]

Let the coordinate $S_{\mu,s}(k\tau)$ be strictly increasing, and let
\[
Z_t^{\tau}=\sup\{ m\tau : S_{\mu,s}(m\tau)\le t\}=\inf\{ m\tau: S_{\mu,s}(m\tau)> t\}, \quad t>s,
\]
be its (generalized) inverse (or hitting times) process.

Then
\[
\E [F(M_{\mu}(Z_t^{\tau}))\1(Z_t^{\tau}\in [1/K,K])]
=\E \sum_{k \tau \in  [1/K,K]} F(M_{\mu}(k\tau)) \1(Z^{\tau}_t=k\tau)
\]
\[
=\E \sum_{k \tau \in  [1/K,K]} F(M_{\mu}(k\tau) \1(S_{\mu,s}((k-1)\tau)<t\le S_{\mu,s}(k\tau))
\]
\begin{equation}
 \label{eq4lemsubsdis}
 = \int F(M) \sum_{k \tau \in  [1/K,K]} \int \1(S<t\le W)
  Q_{\tau}(R,dM) G_{\tau}(R,S;W) \, dW P_{(k-1)\tau}(\mu,s; S, dR) \, dS.
\end{equation}

This can be rewritten as
\begin{equation}
 \label{eq5lemsubsdis}
\E [F(M_{\mu}(Z_t^{\tau}))\1(Z_t^{\tau}\in [1/K,K])]
 = \int \sum_{k \tau \in  [1/K,K]} (U^{\tau}F_S)(R,S) P_{(k-1)\tau}(\mu,s; S, dR)\, dS,
\end{equation}
with $F_S(R,W)= F(R)  \1(S<t\le W)$.

{\it Step 2}. In our case $U^{\tau}$ is given by \eqref{transprobmfMCtot2} and therefore,
for $R=\tau \sum_j\de_{z_j}$,
\eqref{eq5lemsubsdis} turns to the equation
\[
\E [F(M_{\mu}(Z_t^{\tau}))\1(Z_t^{\tau}\in [1/K,K])]
 = \int_{\MC^+_{\delta}(\R^d)} \int_0^t \sum_{k \tau \in  [1/K,K]}
 \int_{\R^+} \int_{\R^d} Q_{\z}(dr)\sum_i p_{\mu}(z_i, dy)\frac{a(z_i)}{A(\z)}
\]
\begin{equation}
 \label{eq6lemsubsdis}
 \times
F\left(R-\tau\de_{z_i}+\tau\de_{z_i+\sqrt{\tau}y}\right) \1(S\le t\le S+\tau^{1/\al \tau A(\z)}r)
 P_{(k-1)\tau}(\mu,s; S, dR)\, dS.
\end{equation}

We need to show that these expectations converge towards \eqref{eq12th2}.
By the density argument it is sufficient to show the convergence for smooth $F$ only.
One of the key observations is that the limit will be the same
if we change $F\left(R-\tau\de_{z_i}+\tau\de_{z_i+hy}\right)$ to just $F(R)$,
as the difference will be small as compared to the latter limit.
Therefore, due to the relation
\[
\sum_i \int p_{\mu}(z_i, dy)\frac{a(z_i)}{A(\z)}=1,
\]
we need to show the convergence of the expressions
\begin{equation}
 \label{eq7lemsubsdis}
 \int_{\MC^+_{\delta}(\R^d)} \int_0^t \sum_{k \tau \in  [1/K,K]}
 \int_{\R^+} Q_{\z}(dr)
F(R) \1(S\le t\le S+\tau^{1/\al \tau A(\z)}r)
 P_{(k-1)\tau}(\mu,s; S, dR)\, dS
\end{equation}
 towards \eqref{eq12th2}.

{\it Step 3}.
The key point for the argument is that, by Theorem \ref{th1}, the sums
 \begin{equation}
 \label{eq8lemsubsdis}
\tau \int_{\MC^+_{\delta}(\R^d)} \int_0^t \sum_{k \tau \in  [1/K,K]}
\Om(R,S) P_{(k-1)\tau}(\mu,s; S, dR)\, dS,
 \end{equation}

approximate uniformly the Riemannian sums for the integral
 \begin{equation}
 \label{eq9lemsubsdis}
 \int_0^t dS \int_{[1/K,K]} du
\, \Om(M_{\mu}(u),S) G(u; \mu,s; S),
 \end{equation}
 and thus converge to this integral for bounded continuous functions $\Om$.

It follows that we can slightly reduce the domain of integration in \eqref{eq7lemsubsdis}.
Namely, the limit of these expressions
will be the same as for the expressions

\[
 \int_{\MC^+_{\delta}(\R^d)}  \int_{[0,t-B\tau^{1/\al \tau A(\z)}]}  \sum_{k \tau \in  [1/K,K]}
\]
\begin{equation}
	\label{eq10lemsubsdis}
 \int_{\R^+} Q_{\z}(dr)
F(R) \1(S\le t\le S+\tau^{1/\al \tau A(\z)}r)
 P_{(k-1)\tau}(\mu,s; S, dR)\, dS.
\end{equation}

In fact the difference of these expressions with  \eqref{eq7lemsubsdis} is bounded in magnitude by
\[
 \|F\| \int_{\MC^+_{\delta}(\R^d)}  \int_{[t-B\tau^{1/\al \tau A(\z)},t]}  \sum_{k \tau \in  [1/K,K]}
 P_{(k-1)\tau}(\mu,s; S, dR)\, dS,
 \]
 which, according to \eqref{eq9lemsubsdis}, is of order
 $\tau^{1/\al \tau A(\z)} \tau^{-1}$, which tends to zero, as $\tau \to 0$, because $\al \tau A(\z)<1$.

Now let us rewrite the expressions \eqref{eq10lemsubsdis}  as
\[
 \int_{\MC^+_{\delta}(\R^d)} \int_{[0,t-B\tau^{1/\al \tau A(\z)}]}  \sum_{k \tau \in  [1/K,K]}
 \]
 \begin{equation}
 \label{eq11lemsubsdis}
 \int_{\R^+} Q_{\z}(dr)
F(R) [\theta_{\ge t}( S+\tau^{1/\al \tau A(\z)}r)-\theta_{\ge t}( S)]
 P_{(k-1)\tau}(\mu,s; S, dR)\, dS.
\end{equation}

The next key observation is that
\[
\theta_{\ge t}( S+\tau^{1/\al \tau A(\z)}r)-\theta_{\ge t}( S)=0
\]
for $S< t-B\tau^{1/\al \tau A(\z)}$ and $r<B$. Therefore, by \eqref{eq0apprrateCTRW},
\[
\int_{\R^+} Q_{\z}(dr)
[\theta_{\ge t}( S+\tau^{1/\al \tau A(\z)}r)-\theta_{\ge t}( S)]=
(\al \tau A(\z))^{-1}\int_0^{\infty} \theta_{\ge t}(S+r) r^{-1-\al \tau A(\z)} \, dr
\]
\[
=(\al \tau A(\z))^{-1}\int_{t-S}^{\infty} r^{-1-\al \tau A(\z)} \, dr
=(t-S)^{-\al \tau A(\z)}
\]
for $S< t-B\tau^{1/\al \tau A(\z)}$. Therefore expression \eqref{eq11lemsubsdis} rewrites as
\begin{equation}
 \label{eq12lemsubsdis}
 \int_{\MC^+_{\delta}(\R^d)} \int_{[0,t-B\tau^{1/\al \tau A(\z)}]}  \sum_{k \tau \in  [1/K,K]}
F(R) (t-S)^{-\al \tau A(\z)}
 P_{(k-1)\tau}(\mu,s; S, dR)\, dS.
\end{equation}
Noticing that changing the integration over  $[0,t-B\tau^{1/\al \tau A(\z)}]$ back to $[0,t]$
does not spoil the limit and using \eqref{eq9lemsubsdis} shows that expressions \eqref{eq12lemsubsdis}
converge to \eqref{eq12th2}, as was claimed.

(iii) Standard probabilistic arguments (using Dynkin's martingale) show that
any solution to equation \eqref{eq4th2} has the probabilistic representation $F(\mu,s)=\E F(\tilde M_{\mu,s}(t))$.
On the other hand, formula \eqref{eq11th2} allows one to check that this function does solve
problem \eqref{eq4th2}. These arguments are almost identical to the those used in \cite{Ko22} in a slightly
different situation, and we omit them here.

\section{Appendix: convergence rates for the standard CTRW}
\label{append}

For convenient referencing, we recall here the basic scheme of
jump-type approximations to stable generators.

 \begin{prop}
 \label{apprrateCTRW}
 Let $p(y)$ be a probability density on $\R_+$ such that
 $p(y)=y^{-1-\al}$ for $y\ge B$ with some $\al \in (0,1)$ and $B>0$. Then

 (i) for any bounded measurable $f$ having support on $[Bh,\infty)$,
\begin{equation}
 \label{eq0apprrateCTRW}
h^{-\al} \int_0^{\infty} f(hy) p(y) dy=\int_0^{\infty} \frac{f(y) dy}{y^{1+\al}},
\end{equation}

(ii) for any continuous $f$ on $\R_+$ such that $f(0)=f(\infty)=0$ and $f$ is Lipschitz
at zero so that $|f(y)| \le L y$ for $y\in [0,Bh]$ and some constant $L$, it follows that
 \begin{equation}
 \label{eq1apprrateCTRW}
 \left|h^{-\al} \int_0^{\infty} f(hy) p(y) dy-\int_0^{\infty} \frac{f(y) dy}{y^{1+\al}}\right| \le C_B L h^{1-\al} ,
\end{equation}
with
\[
C_B =\frac{B^{1-\al}}{1-\al} +\int_0^B y p(y)dy.
\]
 \end{prop}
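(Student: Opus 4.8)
The plan is to prove part (i) by a direct change of variables and then to leverage (i) to obtain part (ii) by isolating the region near the origin. For (i), since $f$ vanishes on $[0,Bh)$, the integrand $f(hy)p(y)$ is supported on $\{y\ge B\}$, precisely where $p(y)=y^{-1-\al}$. First I would substitute $z=hy$ in $\int_0^\infty f(hy)p(y)\,dy=\int_B^\infty f(hy)\,y^{-1-\al}\,dy$, which produces a factor $h^\al$ and the kernel $z^{-1-\al}$, yielding $h^\al\int_{Bh}^\infty f(z)\,z^{-1-\al}\,dz$. Multiplying by $h^{-\al}$ and using once more that $f$ vanishes below $Bh$ to extend the lower limit down to $0$ gives the claimed identity. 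This step is purely computational and presents no obstacle.

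For (ii) the key idea is to split $f=f_{\ge}+f_{<}$, where $f_{\ge}=f\,\1_{[Bh,\infty)}$ carries the tail and $f_{<}=f\,\1_{[0,Bh)}$ carries the behaviour near zero. Since $f$ is continuous with $f(0)=f(\infty)=0$ it is bounded, so $f_{\ge}$ is a bounded measurable function supported on $[Bh,\infty)$ and part (i) applies to it verbatim; consequently the $f_{\ge}$-contributions to the two integrals cancel exactly. The entire difference to be estimated therefore reduces to the $f_{<}$-part, namely $h^{-\al}\int_0^B f(hy)p(y)\,dy-\int_0^{Bh}f(y)\,y^{-1-\al}\,dy$, where I would use that $f_{<}(hy)$ is supported on $y\in[0,B]$ and $f_{<}(y)$ on $y\in[0,Bh]$.

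Each of the two remaining terms I would then bound using only the Lipschitz-at-zero hypothesis $|f(y)|\le Ly$ on $[0,Bh]$. For the first, $|h^{-\al}\int_0^B f(hy)p(y)\,dy|\le Lh^{1-\al}\int_0^B y\,p(y)\,dy$; for the second, $|\int_0^{Bh}f(y)\,y^{-1-\al}\,dy|\le L\int_0^{Bh}y^{-\al}\,dy=Lh^{1-\al}B^{1-\al}/(1-\al)$. Adding the two bounds reproduces exactly $C_B L h^{1-\al}$ with the stated constant. The only point requiring a moment's care, and the closest thing to an obstacle, is verifying that the tail piece $f_{\ge}$ genuinely satisfies the hypotheses of (i), namely boundedness together with the support condition, so that the cancellation is exact rather than merely approximate; once that is checked, the remainder is a routine pair of elementary estimates.
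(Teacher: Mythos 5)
Your proposal is correct and follows essentially the same route as the paper's proof: part (i) by restricting to the support where $p(y)=y^{-1-\al}$ and changing variables, and part (ii) by splitting off the piece of $f$ supported on $[0,Bh]$, cancelling the tail via (i), and bounding the two near-origin integrals by $Lh^{1-\al}\int_0^B y\,p(y)\,dy$ and $\frac{B^{1-\al}}{1-\al}Lh^{1-\al}$ respectively. The only difference is that you make the decomposition and the applicability of (i) to the tail piece explicit, which the paper leaves implicit.
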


\begin{proof}
(i) It follows from the conditions on the support of the function $f$ that we can replace $p(y)$ by $y^{-1-\al}$ and then make the change of variable $hy = y'$, implying \eqref{eq0apprrateCTRW}.

(ii) Let $f$ have support on $[0,Bh]$. Then
\[
\left|h^{-\al} \int_0^{\infty} f(hy) p(dy)\right|=\left|h^{-\al} \int_0^B f(hy) p(y) dy\right|
\le h^{1-\al} L \int_0^B y p(y) dy,
\]
and
\[
\left|\int_0^{\infty} \frac{f(y) dy}{y^{1+\al}}\right|=\left|\int_0^{Bh} \frac{f(y) dy}{y^{1+\al}}\right|
\le L \int_0^{Bh} \frac{dy}{y^{\al}}=\frac{B^{1-\al}}{1-\al}Lh^{1-\al},
\]
implying \eqref{eq1apprrateCTRW}.
\end{proof}

In particular, setting $\tau=h^{\al}$, it follows that
 \begin{equation}
 \label{eq10apprrateCTRW}
 \left|\tau ^{-1} \int_0^{\infty} (f(x \pm \tau^{1/\al} y)-f(x)) p(y) dy
 -\int_0^{\infty} \frac{(f(x\pm y)-f(x)) dy}{y^{1+\al}}\right| \le C_B L \tau^{(1/\al)-1},
\end{equation}
where $L$ is the sup of the derivative of $f$ near $x$.

\medskip

{\bf Acknowledgements.}
The authors would like to thank the Isaac Newton Institute for Mathematical Sciences, Cambridge, for support and hospitality during the programme Fractional Differential Equations Jan-Apr 2022, where work on this paper was undertaken.
	The first author is grateful to the Simons foundation for the support of his residence at INI in Cambridge
during the programme Fractional Differential Equations, Jan-Apr 2022.
		
The work of V.N. Kolokoltsov was supported by the Russian Science Foundation (project No. 20-11-20119),
the work of M.S. Troeva was supported by the Ministry of Science and Higher Education of the Russian Federation (Grant No. FSRG-2020-0006).

\end{document}